\documentclass[12pt, reqno]{amsart}

\usepackage{amsmath, amsthm, amscd, amsfonts, amssymb, graphicx, color}
\usepackage[bookmarksnumbered, colorlinks, plainpages]{hyperref}

\textheight 22.5truecm \textwidth 14.5truecm
\setlength{\oddsidemargin}{0.35in}\setlength{\evensidemargin}{0.35in}

\setlength{\topmargin}{-.5cm}

\newtheorem{theorem}{Theorem}[section]
\newtheorem{lemma}[theorem]{Lemma}

\theoremstyle{definition}

\theoremstyle{remark}
\newtheorem{remark}[theorem]{Remark}

\begin{document}
\setcounter{page}{1}

\title[Inequalities for the Psi and $k$-Gamma Functions]{Inequalities for the Psi and $k$-Gamma Functions}

\author[Kwara Nantomah]{Kwara Nantomah$^{*}$$^1$}

\address{$^{1}$ Department of Mathematics, University for Development Studies, Navrongo Campus, P. O. Box 24, Navrongo, UE/R, Ghana. }
\email{\textcolor[rgb]{0.00,0.00,0.84}{mykwarasoft@yahoo.com, knantomah@uds.edu.gh}}



\subjclass[2010]{33B15, 26A48.}

\keywords{ Gamma function, $k$-Gamma function, Psi function,  Inequality}

\date{09 January 2016.
\newline \indent $^{*}$ Corresponding author}

\begin{abstract}
In this paper, the authors establish some inequalities involving the Psi and $k$-Gamma functions. The procedure utilizes some monotonicity properties of some functions associated with the Psi and $k$-Gamma functions.
\end{abstract} \maketitle

\section{Introduction}

\noindent
The well-known classical Gamma function, $\Gamma(t)$ is usually defined for $t>0$ by
\begin{equation*}\label{eqn:gamma}
\Gamma(t)=\int_0^\infty e^{-x}x^{t-1}\,dx.
\end{equation*}

\noindent
The $p$-analogue of the Gamma function is defined (see also \cite{Krasniqi-Mansour-Shabani-2010}, \cite{Krasniqi-Shabani-2010}) for $t>0$  and $p\in N$ by 
\begin{equation*}\label{eqn:p-gamma}
\Gamma_p(t)=\frac{p!p^t}{t(t+1) \dots (t+p)}=\frac{p^t}{t(1+\frac{t}{1}) \dots (1+\frac{t}{p})}.
\end{equation*}

\noindent
Also, the  $q$-analogue of the Gamma function is defined (see \cite{Mansour-2008}) for  $t>0$  and $q\in(0,1)$ by
\begin{equation*}\label{eqn:q-gamma}
\Gamma_q(t)=(1-q)^{1-t}\prod_{n=1}^{\infty}\frac{1-q^n}{1-q^{n+t}}=(1-q)^{1-t}\prod_{n=0}^{\infty}\frac{1-q^{n+1}}{1-q^{n+t}}.
\end{equation*}

\noindent
Similarly, the $k$-analogue or the $k$-Gamma function is  defined (see \cite{Diaz-Pariguan-2007}) for $t>0$  and $k>0$ by 
\begin{equation*}\label{eqn:k-gamma}
\Gamma_k(t)=\int_0^\infty e^{-\frac{x^k}{k}}x^{t-1}\,dx.
\end{equation*}

\noindent
The psi function, $\psi(t)$ also known in literature as the digamma function is defined for $t>0$ as the logarithmic derivative of the gamma function. That is,
\begin{equation*}\label{eqn:digamma}
\psi(t)=\frac{d}{dt}\ln(\Gamma(t))=\frac{\Gamma'(t)}{\Gamma(t)}.
\end{equation*}

\noindent
The $p$-analogue, $q$-analogue and $k$-analogue of the psi function are equivalently defined for $t>0$ as follows.
\begin{equation*}
\psi_p(t)=\frac{d}{dt}\ln(\Gamma_p(t))=\frac{\Gamma'_p(t)}{\Gamma_p(t)},  \quad   \psi_q(t)=\frac{d}{dt}\ln(\Gamma_q(t))=\frac{\Gamma'_q(t)}{\Gamma_q(t)}  \quad \text{and}
\end{equation*}

\begin{equation*}
\psi_k(t)=\frac{d}{dt}\ln(\Gamma_k(t))=\frac{\Gamma'_k(t)}{\Gamma_k(t)}.
\end{equation*}

\noindent
The following series representations for the functions $\psi(t)$ and $\psi_k(t)$ are valid and are well-known in literature.
\begin{align}
\psi(t)&=-\gamma - \frac{1}{t} + \sum_{n=1}^{\infty}\frac{t}{n(n+t)}  \label{eqn:series-psi}\\
\psi_k(t)&=\frac{\ln k-\gamma}{k}-\frac{1}{t}+\sum_{n=1}^{\infty} \frac{t}{nk(nk+t)}   \label{eqn:k-psi2}
\end{align}
where $\gamma$ denotes the Euler-Mascheroni's constant.\\

\noindent
The polygamma functions, $\psi^{(m)}(t)$  are defined for $t>0$ and $m\in N$ as the $m$-th derivative of the psi function. That is,
\begin{equation*}\label{eqn:polygamma} 
\psi^{(m)}(t) = \frac{d^{m}}{dt^{m}}\psi(t) = \frac{d^{m+1}}{dt^{m+1}}\ln(\Gamma(t)).
\end{equation*}
where $\psi^{(0)}(t)\equiv \psi(t)$. They also exhibit the series representation shown below.

\begin{equation}\label{eqn:series-polygamma}
\psi^{(m)}(t) = (-1)^{m+1}m! \sum_{n=0}^{\infty}\frac{1}{(n+t)^{m+1}}
\end{equation}
Consequently, the following representations are trivially obtained from ~(\ref{eqn:series-polygamma}).

\begin{align}
\psi'(t) &=  \sum_{n=0}^{\infty}\frac{1}{(n+t)^{2}}  \label{eqn:series-digamma-prime}\\
\psi^{(m+1)}(t)& = (-1)^{m+2}(m+1)! \sum_{n=0}^{\infty}\frac{1}{(n+t)^{m+2}}  \label{eqn:series-polygamma-prime}
\end{align}

\noindent
By using basic analyses, the purpose of this paper is to establish  some inequalities for Psi and $k$-Gamma functions. We present our results in the following sections.



\section{ Some inequalities for the Psi function}
\noindent
This section is devoted to some inequalities associated with the Psi function. We proceed as follows.

\begin{lemma}\label{lem:digamma-increasing}
Let  $0<s\leq t$, then the following statement holds true.
\begin{equation}
\psi(s) \leq \psi(t).   \label{eqn:digamma-increasing}
\end{equation}
\end{lemma}

\begin{proof}
From ~(\ref{eqn:series-psi}), we have the following (See also \cite{Shabani-2008a}).
\begin{align*}
\psi(s)-\psi(t) &= (s-1)\sum_{n=0}^{\infty}\frac{1}{(n+1)(n+s)}-(t-1)\sum_{n=0}^{\infty}\frac{1}{(n+1)(n+t)}\\
&= \sum_{n=0}^{\infty}\frac{1}{(n+1)}\left(\frac{s-1}{n+s}-\frac{t-1}{n+t}\right)\\
&= \sum_{n=0}^{\infty}\frac{(s-t)}{(n+s)(n+t)}\leq0 .
\end{align*}
\end{proof}

\begin{lemma}\label{lem:digamma-prime-increasing}
Let  $0<s\leq t$, then the following statement holds true.
\begin{equation}
\psi'(s) \geq \psi'(t).   \label{eqn:digamma-prime-increasing}
\end{equation}
\end{lemma}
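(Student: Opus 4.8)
The plan is to mimic the proof of Lemma~\ref{lem:digamma-increasing}, replacing the series for $\psi$ with the one for $\psi'$. The key ingredient is the representation (\ref{eqn:series-digamma-prime}), namely $\psi'(t) = \sum_{n=0}^{\infty}\frac{1}{(n+t)^{2}}$, which exhibits $\psi'$ as a convergent series of positive terms, each of which is manifestly decreasing in $t$. The whole argument then reduces to a term-by-term comparison.

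First I would form the difference $\psi'(s) - \psi'(t)$ and, using (\ref{eqn:series-digamma-prime}) twice, write it as a single series $\sum_{n=0}^{\infty}\left(\frac{1}{(n+s)^{2}} - \frac{1}{(n+t)^{2}}\right)$; the interchange of the subtraction with the summation is legitimate because both series converge absolutely for $s,t>0$. Next I would show that every summand is nonnegative by combining the two fractions over a common denominator, which gives $\frac{1}{(n+s)^{2}} - \frac{1}{(n+t)^{2}} = \frac{(t-s)(2n+s+t)}{(n+s)^{2}(n+t)^{2}}$. Since $0<s\leq t$, every factor on the right is nonnegative, so each term is $\geq 0$, and summing over $n$ yields $\psi'(s) - \psi'(t) \geq 0$, which is exactly (\ref{eqn:digamma-prime-increasing}).

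I do not anticipate a genuine obstacle here: once the series representation is invoked, the claim rests on the elementary fact that $x \mapsto (n+x)^{-2}$ is decreasing on $(0,\infty)$. An equally short alternative would appeal to the second derivative: differentiating (\ref{eqn:series-digamma-prime}) (equivalently, taking $m=1$ in (\ref{eqn:series-polygamma-prime})) gives $\psi''(t) = -2\sum_{n=0}^{\infty}(n+t)^{-3}<0$, so $\psi'$ is strictly decreasing and the inequality follows from monotonicity. I would favour the direct term-by-term comparison, however, since it needs no differentiation and parallels the structure of the preceding lemma most transparently.
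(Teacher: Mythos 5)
Your proposal is correct and follows essentially the same route as the paper: both combine the two instances of (\ref{eqn:series-digamma-prime}) into a single series and check each term is nonnegative, your numerator $(t-s)(2n+s+t)$ being just the factored form of the paper's $2n(t-s)+(t^2-s^2)$. No substantive difference.
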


\begin{proof}
From ~(\ref{eqn:series-digamma-prime}), we have the following.
\begin{align*}
\psi'(s)-\psi'(t) &= \sum_{n=0}^{\infty}\frac{1}{(n+s)^2} - \sum_{n=0}^{\infty}\frac{1}{(n+t)^2}\\
&= \sum_{n=0}^{\infty}\left[ \frac{1}{(n+s)^2} - \frac{1}{(n+t)^2}  \right]\\
&= \sum_{n=0}^{\infty}\frac{2n(t-s)+(t^2-s^2)}{(n+s)^{2}(n+t)^{2}}\geq0 .
\end{align*}
\end{proof}

\begin{theorem}\label{thm:psi-funct}
Define a function $U$ by 
\begin{equation*}\label{eqn:psi-funct} 
U(t)=\frac{\left[ \psi(a+bt)\right] ^{\alpha}}{\left[ \psi(c+dt)\right] ^{\beta}},  \quad t\in [0,\infty)
\end{equation*}
where  $a$, $b$,  $c$, $d$, $\alpha$, $\beta$ are  positive real numbers such that $a\leq c$, $b \leq d$,  $\beta d \leq \alpha b$, $a+bt \leq c+dt$, $\psi(a+bt)>0$ and $\psi(c+dt)>0$.
Then $U$ is non-decreasing on $t\in[0,\infty)$ and the inequalities
\begin{equation}\label{eqn:psi-funct-ineq}
\frac{\left[ \psi(a)\right] ^{\alpha}}{\left[ \psi(c)\right] ^{\beta}} \leq
\frac{\left[ \psi(a+bt)\right] ^{\alpha}}{\left[ \psi(c+dt)\right] ^{\beta}} \leq
\frac{\left[ \psi(a+b)\right] ^{\alpha}}{\left[ \psi(c+d)\right] ^{\beta}}
\end{equation}
hold true for  $t\in[0,1]$.
\end{theorem}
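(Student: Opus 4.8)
The plan is to show that $U$ is non-decreasing by examining the sign of its logarithmic derivative, and then to obtain the two-sided bound (\ref{eqn:psi-funct-ineq}) simply by evaluating $U$ at the endpoints $t=0$ and $t=1$.

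First I would note that, under the stated hypotheses, $U(t)>0$ on $[0,\infty)$, so $U$ and $\ln U$ have the same monotonicity. Writing
\begin{equation*}
\ln U(t)=\alpha\ln\psi(a+bt)-\beta\ln\psi(c+dt)
\end{equation*}
and differentiating gives
\begin{equation*}
\frac{U'(t)}{U(t)}=\alpha b\,\frac{\psi'(a+bt)}{\psi(a+bt)}-\beta d\,\frac{\psi'(c+dt)}{\psi(c+dt)}.
\end{equation*}
Since $U(t)>0$, establishing $U'(t)\ge 0$ reduces to proving
\begin{equation*}
\alpha b\,\frac{\psi'(a+bt)}{\psi(a+bt)}\ge \beta d\,\frac{\psi'(c+dt)}{\psi(c+dt)}.
\end{equation*}

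The core of the argument is to compare the two ratios $\psi'(a+bt)/\psi(a+bt)$ and $\psi'(c+dt)/\psi(c+dt)$. Because $a+bt\le c+dt$ for every $t\ge 0$, Lemma~\ref{lem:digamma-increasing} gives $0<\psi(a+bt)\le\psi(c+dt)$, while Lemma~\ref{lem:digamma-prime-increasing} gives $\psi'(a+bt)\ge\psi'(c+dt)>0$ (positivity of $\psi'$ being clear from (\ref{eqn:series-digamma-prime})). Hence the first ratio has the larger numerator and the smaller denominator, so
\begin{equation*}
\frac{\psi'(a+bt)}{\psi(a+bt)}\ge\frac{\psi'(c+dt)}{\psi(c+dt)}>0.
\end{equation*}
Combining this with the coefficient hypothesis $\alpha b\ge\beta d>0$ (so that the larger ratio is weighted by the larger positive coefficient) yields the required inequality, and therefore $U'(t)\ge 0$ on $[0,\infty)$.

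Finally, monotonicity immediately delivers (\ref{eqn:psi-funct-ineq}): for $t\in[0,1]$ one has $U(0)\le U(t)\le U(1)$, and substituting $t=0$ and $t=1$ into the definition of $U$ produces exactly the left- and right-hand bounds. I expect the only delicate point to be the bookkeeping in the chain of inequalities: one must invoke the numerator/denominator comparison and the constraint $\alpha b\ge\beta d$ in the right order while keeping every quantity positive, rather than any single deep estimate, since the analytic content is carried entirely by Lemmas~\ref{lem:digamma-increasing} and~\ref{lem:digamma-prime-increasing}.
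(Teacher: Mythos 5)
Your proposal is correct and follows essentially the same route as the paper: both take the logarithmic derivative, invoke Lemmas~\ref{lem:digamma-increasing} and~\ref{lem:digamma-prime-increasing} to compare $\psi$ and $\psi'$ at $a+bt$ and $c+dt$, apply the weight condition $\alpha b\ge\beta d$, and then read off the endpoint bounds from monotonicity. Your ratio comparison $\psi'(a+bt)/\psi(a+bt)\ge\psi'(c+dt)/\psi(c+dt)$ is just the cross-multiplied form of the paper's chain of inequalities, so there is no substantive difference.
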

\begin{proof}
Let $\mu(t)=\ln U(t)$ for every $t\in[0,\infty)$. Then,
\begin{align*}
\mu &=\ln \frac{\left[ \psi(a+bt)\right] ^{\alpha}}{\left[ \psi(c+dt)\right] ^{\beta}}
        = \alpha \ln \psi(a+bt) - \beta \ln \psi(c+dt)   \\
\intertext{and}
\mu'(t)&= \alpha b \frac{\psi'(a+bt)}{\psi(a+bt)} - \beta d \frac{\psi'(c+dt)}{\psi(c+dt)} \\
      &= \frac{\alpha b \psi'(a+bt)\psi(c+dt) - \beta d \psi'(c+dt)\psi(a+bt) }{\psi(a+bt) \psi(c+dt) }. \quad
\end{align*}
Since $0<a+bt \leq c+dt$, then by Lemmas \ref{lem:digamma-increasing} and \ref{lem:digamma-prime-increasing} we have,\\
$\psi(a+bt)\leq \psi(c+dt)$ and $\psi'(a+bt)\geq \psi'(c+dt)$. Then that implies;\\
$\psi(c+dt)\psi'(a+bt) \geq \psi(c+dt)\psi'(c+dt) \geq \psi(a+bt)\psi'(c+dt)$. Further, $\alpha b \geq \beta d$ implies;\\
$\alpha b \psi(c+dt)\psi'(a+bt) \geq \alpha b \psi(a+bt)\psi'(c+dt)  \geq \beta d \psi(a+bt)\psi'(c+dt)$. Hence,\\
$\alpha b \psi(c+dt)\psi'(a+bt) - \beta d \psi(a+bt)\psi'(c+dt)\geq0$. Therefore  $\mu'(t)\geq0$.\\
That implies $\mu$ as well as $U$ are non-decreasing on $t\in[0,\infty)$ and  for $t\in[0,1]$ we have, 
\begin{equation*}
U(0) \leq U(t) \leq U(1)   
\end{equation*}
resulting to inequalities ~(\ref{eqn:psi-funct-ineq}).
\end{proof}

\begin{remark}
If in particular, $a=b=c=d=1$ then we obtain
\begin{equation*}
(-\gamma)^{\alpha - \beta} \leq \left[ \psi(1+t) \right]^{\alpha - \beta} \leq (1-\gamma)^{\alpha - \beta}.
\end{equation*}
\end{remark}

\begin{remark}
For $t\in(1,\infty)$,  we have \,$U(t)\geq U(1)$\, yielding
\begin{equation*}
\frac{\left[ \psi(a+bt)\right] ^{\alpha}}{\left[ \psi(c+dt)\right] ^{\beta}} \geq
\frac{\left[ \psi(a+b)\right] ^{\alpha}}{\left[ \psi(c+d)\right] ^{\beta}}.
\end{equation*}
\end{remark}

\begin{remark}
Results similar to Theorem \ref{thm:psi-funct} can also be found  in \cite{Nantomah-Iddrisu-2014} for the $k$-analogue of the psi function.\\
\end{remark}

\begin{lemma}\label{lem:Polygamma-Turan-Ineq-Particular}
Let $m$ be a positive odd integer and $t>0$. Then 
\begin{equation*}
\psi^{(m)}(t)\psi^{(m+2)}(t) - \left[ \psi^{(m+1)}(t) \right]^2 \geq0.
\end{equation*}
\end{lemma}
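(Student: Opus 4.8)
The plan is to reduce the claimed Tur\'an-type inequality to the Cauchy--Schwarz inequality applied to the series in~(\ref{eqn:series-polygamma}). First I would record the signs of the three polygamma functions involved. Since $m$ is odd, the factor $(-1)^{m+1}$ in~(\ref{eqn:series-polygamma}) equals $1$, while $(-1)^{m+2}=-1$ and $(-1)^{m+3}=1$; hence, writing $S_j:=\sum_{n=0}^{\infty}(n+t)^{-(j+1)}$ for the (convergent, since $t>0$ and $j+1\geq 2$) series, one has
\begin{equation*}
\psi^{(m)}(t)=m!\,S_m>0,\qquad \psi^{(m+1)}(t)=-(m+1)!\,S_{m+1}<0,\qquad \psi^{(m+2)}(t)=(m+2)!\,S_{m+2}>0.
\end{equation*}
In particular $\psi^{(m)}(t)\psi^{(m+2)}(t)=m!\,(m+2)!\,S_m S_{m+2}>0$ and $\bigl[\psi^{(m+1)}(t)\bigr]^2=\bigl[(m+1)!\bigr]^2 S_{m+1}^2$, so the inequality to be proved becomes $m!\,(m+2)!\,S_m S_{m+2}\geq \bigl[(m+1)!\bigr]^2 S_{m+1}^2$.

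Next I would estimate $S_{m+1}^2$ from above by Cauchy--Schwarz. Splitting the exponent symmetrically, $(n+t)^{-(m+2)}=(n+t)^{-(m+1)/2}\,(n+t)^{-(m+3)/2}$, and applying the Cauchy--Schwarz inequality to the sequences $a_n=(n+t)^{-(m+1)/2}$ and $b_n=(n+t)^{-(m+3)/2}$ yields
\begin{equation*}
S_{m+1}^2=\Bigl(\sum_{n=0}^{\infty}a_n b_n\Bigr)^2\leq\Bigl(\sum_{n=0}^{\infty}a_n^2\Bigr)\Bigl(\sum_{n=0}^{\infty}b_n^2\Bigr)=S_m\,S_{m+2}.
\end{equation*}

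Finally I would reconcile the factorial coefficients. Since $m!\,(m+2)!=(m+1)(m+2)(m!)^2\geq (m+1)^2(m!)^2=\bigl[(m+1)!\bigr]^2$, and $S_m S_{m+2}\geq S_{m+1}^2\geq 0$, multiplying these two inequalities (all quantities being nonnegative) gives $m!\,(m+2)!\,S_m S_{m+2}\geq \bigl[(m+1)!\bigr]^2 S_{m+1}^2$, which is exactly the desired bound. The only points requiring care --- the main obstacle --- are bookkeeping ones: getting the three signs right from the parity of $m$ so that the left-hand product is genuinely positive, and checking that the leftover factorial factor $m!\,(m+2)!$ dominates $\bigl[(m+1)!\bigr]^2$ rather than working against the Cauchy--Schwarz estimate. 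Both turn out favourably, and the same argument works verbatim for even $m$ as well (where $\psi^{(m)}$ and $\psi^{(m+2)}$ are both negative, so their product is again positive). An equivalent route replaces the series by the Laplace-type representation $\psi^{(m)}(t)=(-1)^{m+1}\int_0^{\infty}x^m e^{-tx}(1-e^{-x})^{-1}\,dx$ and applies Cauchy--Schwarz with respect to the positive measure $e^{-tx}(1-e^{-x})^{-1}\,dx$, which absorbs the factorials automatically.
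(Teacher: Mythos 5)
Your proof is correct, but it takes a genuinely different route from the paper. The paper disposes of this lemma in one line by citing Theorem 2.1 of Laforgia and Natalini, which states $\psi^{(m)}(t)\psi^{(n)}(t)\geq\bigl[\psi^{(\frac{m+n}{2})}(t)\bigr]^{2}$, and specializing to $n=m+2$. You instead give a self-contained elementary argument: extract the signs and factorials from the series representation~(\ref{eqn:series-polygamma}), apply Cauchy--Schwarz to the symmetric split $(n+t)^{-(m+2)}=(n+t)^{-(m+1)/2}(n+t)^{-(m+3)/2}$ to get $S_{m+1}^{2}\leq S_{m}S_{m+2}$, and observe that the leftover factorial comparison $m!\,(m+2)!\geq\bigl[(m+1)!\bigr]^{2}$ goes the right way. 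All three steps check out (the sign bookkeeping, the Cauchy--Schwarz application, and the factorial inequality), and your observation that the argument also covers even $m$ is a genuine strengthening of the stated lemma, which the paper's citation also delivers but does not remark on. What your approach buys is independence from the external reference and transparency about where the inequality comes from; what the paper's approach buys is brevity and access to the more general Tur\'an-type inequality for arbitrary admissible pairs $(m,n)$, of which this lemma is the special case $n=m+2$. It is worth noting that the Laforgia--Natalini proof is itself ultimately a Cauchy--Schwarz (or H\"older) argument on an integral representation, so your alternative integral route at the end essentially reconstructs their proof in this special case.
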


\begin{proof}
It was established in \cite[Theorem 2.1]{Laforgia-Natalini-2006-JIPAM} that   $\psi^{(m)}(t)\psi^{(n)}(t) \geq  \left[ \psi^{(\frac{m+n}{2})}(t) \right]^2 $, where $\frac{m+n}{2}$ is an integer and $t>0$. Let $n=m+2$ then,  the proof is complete.
\end{proof}


\begin{lemma}\label{lem:Polygamma-Ratio-Increasing}
Let $m$ be a positive odd integer. Then for $0<s\leq t$, we have 
\begin{equation*}
\frac{\psi^{(m+1)}(s)}{\psi^{(m)}(s)} \leq \frac{\psi^{(m+1)}(t)}{\psi^{(m)}(t)}.  
\end{equation*}
\end{lemma}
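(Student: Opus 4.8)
The plan is to show that the ratio function
\[
g(t) = \frac{\psi^{(m+1)}(t)}{\psi^{(m)}(t)}
\]
is non-decreasing on $(0,\infty)$, from which the assertion $g(s) \leq g(t)$ for $0 < s \leq t$ follows immediately. First I would verify that the denominator never vanishes and keeps a fixed sign: since $m$ is odd, $m+1$ is even, and the series representation \eqref{eqn:series-polygamma} gives $\psi^{(m)}(t) = (-1)^{m+1} m! \sum_{n=0}^{\infty} (n+t)^{-(m+1)} > 0$ for every $t > 0$. Hence $g$ is well defined and differentiable on $(0,\infty)$, and $\left[\psi^{(m)}(t)\right]^2 > 0$.

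Next I would differentiate $g$ by the quotient rule, which yields
\[
g'(t) = \frac{\psi^{(m)}(t)\psi^{(m+2)}(t) - \left[\psi^{(m+1)}(t)\right]^2}{\left[\psi^{(m)}(t)\right]^2}.
\]
The numerator is precisely the Tur\'an-type expression appearing in Lemma \ref{lem:Polygamma-Turan-Ineq-Particular}, which asserts that for odd $m$ and $t > 0$ one has $\psi^{(m)}(t)\psi^{(m+2)}(t) - \left[\psi^{(m+1)}(t)\right]^2 \geq 0$. Combined with the positivity of the denominator established above, this gives $g'(t) \geq 0$ for all $t > 0$.

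Therefore $g$ is non-decreasing on $(0,\infty)$, so $0 < s \leq t$ forces $g(s) \leq g(t)$, which is exactly the claimed inequality. As for difficulty, there is essentially no serious obstacle here: Lemma \ref{lem:Polygamma-Turan-Ineq-Particular} supplies all of the analytic content, and the argument reduces to a single quotient-rule computation. The only point requiring care is the preliminary sign check confirming $\psi^{(m)}(t) > 0$ for odd $m$, which guarantees both that the ratio is well defined and that dividing by $\left[\psi^{(m)}(t)\right]^2$ preserves the inequality $g'(t) \geq 0$.
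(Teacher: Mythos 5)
Your proposal is correct and follows essentially the same route as the paper: define the ratio, differentiate by the quotient rule, and invoke Lemma \ref{lem:Polygamma-Turan-Ineq-Particular} to get a nonnegative numerator over a positive denominator. The only addition is your explicit check that $\psi^{(m)}(t)>0$ for odd $m$, which the paper leaves implicit but which is a worthwhile clarification.
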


\begin{proof}
Let $Q(y)=\frac{\psi^{(m+1)}(y)}{\psi^{(m)}(y)}$ , where $m$ is a positive odd integer and $y>0$. Then,
\begin{equation*}
Q'(y)=\frac{\psi^{(m)}(y)\psi^{(m+2)}(y)-[ \psi^{(m+1)}(y) ]^2 }{[ \psi^{(m)}(y)]^2} 
\end{equation*}
and by Lemma \ref{lem:Polygamma-Turan-Ineq-Particular}, $Q'(y)\geq0$. Thus, $Q$ is increasing. Then for $0<s\leq t$, we obtain
\begin{equation*}
\frac{\psi^{(m+1)}(s)}{\psi^{(m)}(s)} \leq \frac{\psi^{(m+1)}(t)}{\psi^{(m)}(t)}   
\end{equation*}
concluding the proof.
\end{proof}

\begin{theorem}\label{thm:polygamma-funct-1}
For a positive odd integer $m$, define a function $V$ by 
\begin{equation*}\label{eqn:polygamma-funct-1} 
V(t)=\frac{ \psi^{(m)}(\alpha+t)}{\psi^{(m)}(\beta+t)},  \quad t\in [0,\infty)
\end{equation*}
where $0<\alpha \leq \beta$ are  real numbers. Then $V$ is decreasing on $t\in[0,\infty)$ and the inequalities
\begin{equation}\label{eqn:polygamma-funct-1-ineq}
\frac{ \psi^{(m)}(\alpha)}{\psi^{(m)}(\beta)} \geq
\frac{ \psi^{(m)}(\alpha+t)}{\psi^{(m)}(\beta+t)} \geq
\frac{ \psi^{(m)}(\alpha+1)}{\psi^{(m)}(\beta+1)}
\end{equation}
are valid for $t\in[0,1]$.\\
\end{theorem}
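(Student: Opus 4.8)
The plan is to mimic the logarithmic-derivative argument used in the proof of Theorem \ref{thm:psi-funct}, now leveraging Lemma \ref{lem:Polygamma-Ratio-Increasing} in place of Lemmas \ref{lem:digamma-increasing} and \ref{lem:digamma-prime-increasing}. First I would record that $V$ is well defined and positive: since $m$ is odd, $m+1$ is even, so $(-1)^{m+1}=1$ and the series representation ~(\ref{eqn:series-polygamma}) gives
\begin{equation*}
\psi^{(m)}(t)=m!\sum_{n=0}^{\infty}\frac{1}{(n+t)^{m+1}}>0, \quad t>0.
\end{equation*}
In particular $\psi^{(m)}(\alpha+t)>0$ and $\psi^{(m)}(\beta+t)>0$ for all $t\in[0,\infty)$, so $V(t)>0$ and $\ln V(t)$ makes sense.

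Next I would set $\nu(t)=\ln V(t)=\ln\psi^{(m)}(\alpha+t)-\ln\psi^{(m)}(\beta+t)$ and differentiate. Using $Q(y)=\dfrac{\psi^{(m+1)}(y)}{\psi^{(m)}(y)}$ as in Lemma \ref{lem:Polygamma-Ratio-Increasing}, the chain rule yields
\begin{equation*}
\nu'(t)=\frac{\psi^{(m+1)}(\alpha+t)}{\psi^{(m)}(\alpha+t)}-\frac{\psi^{(m+1)}(\beta+t)}{\psi^{(m)}(\beta+t)}=Q(\alpha+t)-Q(\beta+t).
\end{equation*}
Since $0<\alpha\leq\beta$ gives $0<\alpha+t\leq\beta+t$, Lemma \ref{lem:Polygamma-Ratio-Increasing} (the monotonicity of $Q$) forces $Q(\alpha+t)\leq Q(\beta+t)$, hence $\nu'(t)\leq 0$. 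Thus $\nu$, and therefore $V$, is decreasing on $[0,\infty)$.

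Finally, restricting to $t\in[0,1]$ and using that $V$ is decreasing, I would conclude $V(0)\geq V(t)\geq V(1)$, which is precisely the chain of inequalities ~(\ref{eqn:polygamma-funct-1-ineq}). I do not anticipate a genuine obstacle here, since the heavy lifting is already packaged in Lemma \ref{lem:Polygamma-Ratio-Increasing} (itself resting on the Tur\'an-type inequality of Lemma \ref{lem:Polygamma-Turan-Ineq-Particular}); the only points demanding care are the parity bookkeeping that secures $\psi^{(m)}>0$ for odd $m$, and the direction of the inequality, which makes $V$ \emph{decreasing} rather than non-decreasing as in Theorem \ref{thm:psi-funct}.
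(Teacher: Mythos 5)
Your proposal is correct and follows essentially the same route as the paper: take $\ln V$, differentiate, and apply Lemma \ref{lem:Polygamma-Ratio-Increasing} to get $V'\leq 0$, then evaluate at the endpoints of $[0,1]$. The only addition is your explicit check that $\psi^{(m)}>0$ for odd $m$ via ~(\ref{eqn:series-polygamma}), which the paper leaves implicit but which is a worthwhile remark since it justifies taking the logarithm.
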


\begin{proof}
Let $f(t)=\ln V(t)$ for $t\in[0,\infty)$. That is,
\begin{align*}
f(t) &=\ln \frac{ \psi^{(m)}(\alpha+t)}{\psi^{(m)}(\beta+t)}
        =  \ln \psi^{(m)}(\alpha+t) - \ln \psi^{(m)}(\beta+t). \quad \text{Then,}\\
f'(t)&=  \frac{\psi^{(m+1)}(\alpha+t)}{\psi^{(m)}(\alpha+t)} -  \frac{\psi^{(m+1)}(\beta+t)}{\psi^{(m)}(\beta+t)} \\
   &\leq 0 
\end{align*}
as a result of Lemma \ref{lem:Polygamma-Ratio-Increasing}. Thus $f$ and for that matter $V$ are decreasing on $t\in[0,\infty)$ and  for $t\in[0,1]$ we have, 
\begin{equation*}
V(0) \geq V(t) \geq V(1)  
\end{equation*}
resulting to inequalities ~(\ref{eqn:polygamma-funct-1-ineq}).
\end{proof}

\begin{remark}
If  $t\in(1,\infty)$, then  we have \,$V(t) < V(1)$\, yielding 
\end{remark}
\begin{equation*}
\frac{ \psi^{(m)}(\alpha+t)}{\psi^{(m)}(\beta+t)} <
\frac{ \psi^{(m)}(\alpha+1)}{\psi^{(m)}(\beta+1)}.
\end{equation*}


\section{ Some inequalities for the $k$-Gamma Function}

\noindent
This section is dedicated to some inequalities associated with the $k$-Gamma function.

\noindent
In 2010, Krasniqi and Shabani \cite{Krasniqi-Shabani-2010} proved that,
\begin{equation*}\label{eqn:p-ineq1}
\frac{p^{-t}e^{-\gamma t}\Gamma(\alpha)}{\Gamma_p(\alpha)}< 
\frac{\Gamma(\alpha+t)}{\Gamma_p(\alpha+t)}<
\frac{p^{1-t}e^{\gamma(1-t)} \Gamma(\alpha+1)}{\Gamma_p(\alpha+1)}
\end{equation*}
for $p\in N$, $t\in(0,1)$, where  $\alpha$ is a positive real number such that $\alpha+t>1$.\\

\noindent
Also in that same year, Krasniqi, Mansour and Shabani \cite{Krasniqi-Mansour-Shabani-2010} proved the following: 
\begin{equation*}\label{eqn:q-ineq1}
\frac{(1-q)^{t}e^{-\gamma t} \Gamma(\alpha)}{\Gamma_q(\alpha)}< 
\frac{\Gamma(\alpha+t)}{\Gamma_q(\alpha+t)}<
 \frac{(1-q)^{t-1}e^{\gamma(1-t)}\Gamma(\alpha+1)}{\Gamma_q(\alpha+1)}
\end{equation*}
for $q\in(0,1)$,  $t\in(0,1)$, where  $\alpha$ is a positive real number such that $\alpha+t>1$.\\

\noindent
In this section, our interest is to establish similar inequalities for the $k$-Gamma function. We also present  some new results involving products of certain ratios of the $k$-Gamma function. We proceed as follows.\\

\begin{lemma}\label{lem:psi-kpsi2}
Let $k\ge1$ and $\alpha>0$ such that $\alpha+t>0$. Then,
\begin{equation*}\label{eqn:psi-kpsi2} 
\gamma + \frac{\ln k -\gamma}{k}+\psi(\alpha+t) - \psi_k(\alpha+t) \geq0.
\end{equation*}
\end{lemma}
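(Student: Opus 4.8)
The plan is to invoke the series representations (\ref{eqn:series-psi}) and (\ref{eqn:k-psi2}) directly, substitute them into the left-hand side, and watch every non-series term cancel. Writing $x = \alpha + t > 0$ for brevity, I would record
\[
\psi(x) = -\gamma - \frac{1}{x} + \sum_{n=1}^{\infty}\frac{x}{n(n+x)}, \qquad \psi_k(x) = \frac{\ln k - \gamma}{k} - \frac{1}{x} + \sum_{n=1}^{\infty}\frac{x}{nk(nk+x)}.
\]
Forming $\gamma + \frac{\ln k - \gamma}{k} + \psi(x) - \psi_k(x)$, the added $\gamma$ cancels the $-\gamma$ coming from $\psi(x)$, the added constant $\frac{\ln k - \gamma}{k}$ cancels the identical constant in $\psi_k(x)$, and the two $-\frac{1}{x}$ terms annihilate one another. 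What remains is purely the difference of the two series.

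After this cancellation I expect to obtain
\[
\gamma + \frac{\ln k - \gamma}{k} + \psi(x) - \psi_k(x) = x\sum_{n=1}^{\infty}\left[\frac{1}{n(n+x)} - \frac{1}{nk(nk+x)}\right],
\]
so it suffices to show that each summand is nonnegative. Since $x>0$, this reduces to the pointwise comparison $nk(nk+x) \geq n(n+x)$, i.e. $n^2k^2 + nkx \geq n^2 + nx$, for every $n\geq 1$.

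The hypothesis $k\geq 1$ enters at exactly this last step and nowhere else: from $k\geq 1$ we get $k^2 \geq 1$, hence $n^2k^2 \geq n^2$, and likewise $nkx \geq nx$ (using $n,x>0$); adding these yields the required term-by-term inequality, so the whole sum is nonnegative and the lemma follows. I do not anticipate any genuine obstacle here. The only substantive content is the bookkeeping that makes the constant and $\frac{1}{x}$ terms cancel, together with the elementary observation that replacing $n$ by $nk$ in the denominator $nk(nk+x)$ can only enlarge it when $k\geq 1$; the mild point to check carefully is simply that the two series are being compared index-by-index (the $n$-th term of one against the $n$-th term of the other), which is legitimate since both converge.
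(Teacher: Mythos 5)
Your proposal is correct and follows essentially the same route as the paper: substitute the two series representations, cancel the constants and the $-\frac{1}{x}$ terms, and observe that the remaining difference of series is termwise nonnegative because $k\ge 1$ makes $nk(nk+x)\ge n(n+x)$. In fact you supply slightly more detail than the paper, which simply asserts that the bracketed difference of sums is nonnegative without spelling out the term-by-term comparison.
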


\begin{proof}
Using  equations ~(\ref{eqn:series-psi}) and ~(\ref{eqn:k-psi2}) we obtain,
\begin{equation*}
\gamma + \frac{\ln k -\gamma}{k}+\psi(t) - \psi_k(t) = 
t\left[ \sum_{n=1}^{\infty}\frac{1}{n(n+t)} - \sum_{n=1}^{\infty}\frac{1}{nk(nk+t)} \right]\geq0.
\end{equation*}
\end{proof}
Substituting $t$ by $\alpha+t$ completes the proof.\\

\begin{theorem}\label{thm:funct2}
Define a function $W$ for $k\geq1$ by 
\begin{equation*}\label{eqn:funct2} 
W(t)=\frac{k^{\frac{t}{k}}e^{t \left( \frac{k\gamma - \gamma}{k} \right)}\Gamma(\alpha+t)}{\Gamma_k(\alpha+t)}    ,    \quad t\in (0,\infty)
\end{equation*}
where  $\alpha$ is a positive real number. 
Then $W$ is increasing on $t\in(0,\infty)$ and for $t\in(0,1)$, the following inequalities are valid.
\end{theorem}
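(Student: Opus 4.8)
The plan is to follow the same logarithmic-differentiation strategy used in Theorem~\ref{thm:psi-funct}, with Lemma~\ref{lem:psi-kpsi2} supplying the key nonnegativity. First I would set $g(t)=\ln W(t)$ for $t\in(0,\infty)$, so that
\begin{equation*}
g(t)=\frac{t\ln k}{k}+t\,\frac{k\gamma-\gamma}{k}+\ln\Gamma(\alpha+t)-\ln\Gamma_k(\alpha+t).
\end{equation*}
Differentiating and recalling that $\psi=(\ln\Gamma)'$ and $\psi_k=(\ln\Gamma_k)'$ gives
\begin{equation*}
g'(t)=\frac{\ln k}{k}+\frac{k\gamma-\gamma}{k}+\psi(\alpha+t)-\psi_k(\alpha+t).
\end{equation*}

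The crucial observation, and really the reason the exponential prefactor in $W$ was chosen as it was, is that the constant terms collapse exactly into the constant appearing in Lemma~\ref{lem:psi-kpsi2}. Indeed
\begin{equation*}
\frac{\ln k}{k}+\frac{k\gamma-\gamma}{k}=\gamma+\frac{\ln k-\gamma}{k},
\end{equation*}
so that
\begin{equation*}
g'(t)=\gamma+\frac{\ln k-\gamma}{k}+\psi(\alpha+t)-\psi_k(\alpha+t).
\end{equation*}
By Lemma~\ref{lem:psi-kpsi2}, applied with the stated hypotheses $k\ge1$, $\alpha>0$ and $\alpha+t>0$, the right-hand side is nonnegative, hence $g'(t)\ge0$ on $(0,\infty)$.

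This shows that $g$, and therefore $W=e^{g}$, is non-decreasing on $(0,\infty)$. Since $\alpha>0$, every factor of $W$ extends continuously to $t=0$, so $W$ is monotone on $[0,\infty)$ with $W(0)=\Gamma(\alpha)/\Gamma_k(\alpha)$. For $t\in(0,1)$ monotonicity then yields $W(0)\le W(t)\le W(1)$, that is
\begin{equation*}
\frac{\Gamma(\alpha)}{\Gamma_k(\alpha)}\le W(t)\le \frac{k^{\frac{1}{k}}e^{\frac{k\gamma-\gamma}{k}}\Gamma(\alpha+1)}{\Gamma_k(\alpha+1)},
\end{equation*}
which are the asserted inequalities.

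As for the main obstacle, there is no genuine analytic difficulty once Lemma~\ref{lem:psi-kpsi2} is available; the delicate point is purely the algebraic bookkeeping in the displays above, namely verifying that the prefactor $k^{t/k}e^{t(k\gamma-\gamma)/k}$ contributes precisely the constant $\gamma+\frac{\ln k-\gamma}{k}$ to $g'(t)$. I would check this simplification carefully, since the whole argument depends on $g'$ matching the expression of the lemma term for term. I would also note that for $k=1$ the two series in Lemma~\ref{lem:psi-kpsi2} coincide, so $g'\equiv0$ and $W$ is constant; hence only the non-strict inequalities are available in general, and strictness holds exactly when $k>1$.
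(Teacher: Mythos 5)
Your proposal is correct and follows essentially the same route as the paper: take $\ln W$, differentiate, observe that the prefactor's contribution $\frac{\ln k}{k}+\frac{k\gamma-\gamma}{k}$ equals $\gamma+\frac{\ln k-\gamma}{k}$, and invoke Lemma~\ref{lem:psi-kpsi2} to get $g'\ge 0$, hence $W(0)\le W(t)\le W(1)$ on $(0,1)$. Your added remarks about continuity at $t=0$ and the $k=1$ degenerate case are sensible refinements but do not change the argument.
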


\begin{equation}\label{eqn:k-digamma}
\frac{k^{-\frac{t}{k}}e^{-t \left( \frac{k\gamma - \gamma}{k} \right)}\Gamma(\alpha)}{\Gamma_k(\alpha)}\leq 
\frac{\Gamma(\alpha+t)}{\Gamma_k(\alpha+t)}\leq
\frac{k^{\frac{1-t}{k}}e^{(1-t) \left( \frac{k\gamma - \gamma}{k} \right)}\Gamma(\alpha+1)}{\Gamma_k(\alpha+1)}.
\end{equation}

\begin{proof}
Let $v(t)=\ln W(t)$ for every $t\in(0,\infty)$. Then,
\begin{align*}
v(t) &=\ln \frac{k^{\frac{t}{k}}e^{t \left( \frac{k\gamma - \gamma}{k} \right)}\Gamma(\alpha+t)}{\Gamma_k(\alpha+t)} \\
&=\frac{t}{k}\ln k + t\left( \frac{k\gamma - \gamma}{k} \right) + \ln \Gamma(\alpha+t)-\ln \Gamma_k(\alpha+t) \\
\intertext{Then,}
v'(t)&=\frac{\ln k}{k}+ \frac{k\gamma - \gamma}{k} + \psi(\alpha+t) -  \psi_k(\alpha+t) \\
&= \gamma + \frac{\ln k -\gamma}{k}+\psi(\alpha+t) - \psi_k(\alpha+t) \geq0. \quad  \text{(by Lemma~\ref{lem:psi-kpsi2})}.
\end{align*}
\end{proof}
\noindent
That implies $v$ is increasing on $t\in(0,\infty)$. Hence $W=e^{v(t)}$ is increasing on $t\in(0,\infty)$ and  for $t\in(0,1)$ we have, 
\begin{equation*}
W(0) \leq W(t) \leq W(1) 
\end{equation*}
resulting to inequalities ~(\ref{eqn:k-digamma}).\\

\begin{remark}
For  $t\in[1,\infty)$,  we have $W(1)\leq W(t)$  yielding
\begin{equation*}
\frac{k^{\frac{1-t}{k}}e^{(1-t) \left( \frac{k\gamma - \gamma}{k} \right)}\Gamma(\alpha+1)}{\Gamma_k(\alpha+1)} \leq
\frac{\Gamma(\alpha+t)}{\Gamma_k(\alpha+t)}.
\end{equation*}
\end{remark}

\begin{lemma}\label{lem:k-psi4}
Let $k>0$,  $s>0$, $t>0$ with $s\leq t$, then
\begin{equation}\label{eqn:k-psi4}
\psi_k(s)\leq \psi_k(t).
\end{equation}
\end{lemma}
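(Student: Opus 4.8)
The plan is to imitate the proof of Lemma~\ref{lem:digamma-increasing}, working directly from the series representation~(\ref{eqn:k-psi2}) for $\psi_k$. First I would form the difference $\psi_k(s)-\psi_k(t)$. Since the constant $\tfrac{\ln k-\gamma}{k}$ does not depend on the argument, it cancels immediately, leaving
\[
\psi_k(s)-\psi_k(t)=\left(-\frac{1}{s}+\frac{1}{t}\right)+\sum_{n=1}^{\infty}\left[\frac{s}{nk(nk+s)}-\frac{t}{nk(nk+t)}\right].
\]

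The key manipulation is the partial-fraction identity $\frac{u}{nk(nk+u)}=\frac{1}{nk}-\frac{1}{nk+u}$, valid for every $n\ge 1$ and any $u>0$. Applying it with $u=s$ and with $u=t$ makes the $\frac{1}{nk}$ contributions cancel inside each summand, so the $n$-th term collapses to
\[
\frac{1}{nk+t}-\frac{1}{nk+s}=\frac{s-t}{(nk+s)(nk+t)}.
\]

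Finally I would observe that the leading bracket is exactly the $n=0$ instance of this same expression, because $-\frac{1}{s}+\frac{1}{t}=\frac{s-t}{st}=\frac{s-t}{(0\cdot k+s)(0\cdot k+t)}$. Absorbing it into the series folds everything into a single sum over $n\ge 0$, giving
\[
\psi_k(s)-\psi_k(t)=\sum_{n=0}^{\infty}\frac{s-t}{(nk+s)(nk+t)},
\]
and since $k>0$ together with $s,t>0$ makes every denominator positive while $s\le t$ makes every numerator nonpositive, the sum is $\le 0$, which is precisely~(\ref{eqn:k-psi4}). I do not expect a genuine obstacle here; the only point needing care is the bookkeeping that lets the $-\tfrac1s+\tfrac1t$ term merge with the reindexed tail into one series, and noting that positivity of the denominators requires only $k,s,t>0$ (so the weaker hypothesis $k>0$, rather than $k\ge 1$, already suffices).
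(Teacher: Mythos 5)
Your proof is correct and follows essentially the same route as the paper: both start from the series representation~(\ref{eqn:k-psi2}), apply the partial-fraction identity to collapse each summand to $\frac{s-t}{(nk+s)(nk+t)}$, and conclude nonpositivity from $s\le t$. The only cosmetic difference is that you absorb the $\frac{s-t}{st}$ term into the series as an $n=0$ term, whereas the paper keeps it separate.
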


\begin{proof}
From ~(\ref{eqn:k-psi2}), we have the following.
\begin{align*}
\psi_k(s)-\psi_k(t) &=\frac{1}{t}-\frac{1}{s}+ \sum_{n=1}^{\infty}\left(  \frac{1}{nk}-\frac{1}{s+nk} \right)-
\sum_{n=1}^{\infty}\left(  \frac{1}{nk}-\frac{1}{t+nk} \right)\\
&= \frac{s-t}{st}+ \sum_{n=1}^{\infty}\left(  \frac{1}{t+nk}-\frac{1}{s+nk} \right) \\
&=  \frac{s-t}{st}+\sum_{n=1}^{\infty}\frac{(s-t)}{(s+nk)(t+nk)}\leq0.
\end{align*}
\end{proof}

\begin{lemma}\label{lem:k-psi5}
Let $a, b,  \alpha_i,  \beta_i$, $i=1,...,n$ be real numbers such that  $at+\alpha_i>0$, $bt+\beta_i>0$. Then,   $at+\alpha_i \leq bt+\beta_i$ implies   $\psi_k(at+\alpha_i)\leq\psi_k(bt+\beta_i)$.
\end{lemma}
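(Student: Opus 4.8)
The plan is to derive this directly from the monotonicity of $\psi_k$ established in Lemma \ref{lem:k-psi4}. The statement is really just a reindexed, affine-argument version of that monotonicity property, so no fresh analysis of the series representation~(\ref{eqn:k-psi2}) is required; the whole content has already been packaged into the previous lemma.

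First I would fix an arbitrary index $i \in \{1,\dots,n\}$ and set $s := at+\alpha_i$ and $u := bt+\beta_i$. The hypotheses guarantee $s>0$ and $u>0$, while the assumed inequality $at+\alpha_i \leq bt+\beta_i$ is precisely $s \leq u$. Hence the triple $(k,s,u)$ satisfies the conditions of Lemma \ref{lem:k-psi4} (with $k>0$ inherited from that lemma's standing hypothesis). Applying Lemma \ref{lem:k-psi4} then gives $\psi_k(s) \leq \psi_k(u)$, i.e. $\psi_k(at+\alpha_i) \leq \psi_k(bt+\beta_i)$. Since $i$ was arbitrary, this holds for every $i=1,\dots,n$, which is the desired conclusion.

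I do not anticipate any genuine obstacle: the only thing to verify is that the positivity and ordering hypotheses transfer to the arguments, and these hold by assumption. The single point worth a moment's care is the notational clash between the free variable $t$ appearing in the arguments here and the variable also named $t$ in the statement of Lemma \ref{lem:k-psi4}; relabelling the two arguments as $s$ and $u$, as above, sidesteps any ambiguity and makes the application unambiguous.
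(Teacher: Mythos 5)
Your proposal is correct and follows exactly the paper's route: the paper also proves this lemma as ``a direct consequence of Lemma \ref{lem:k-psi4}'', and your substitution $s=at+\alpha_i$, $u=bt+\beta_i$ is just a careful spelling-out of that one-line deduction. Nothing further is needed.
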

\begin{proof}
A direct consequence of Lemma \ref{lem:k-psi4}. 
\end{proof}

\begin{lemma}\label{lem:k-psi6}
Let $a, b,  \alpha_i,  \beta_i$, $i=1,...,n$ be real numbers such that $0<a \leq b$, $at+\alpha_i>0$, $bt+\beta_i>0$,   $at+\alpha_i \leq bt+\beta_i$ and $\psi_k(at+\alpha_i)>0$. Then,  $a\psi_k(at+\alpha_i)\leq b\psi_k(bt+\beta_i)$.
\end{lemma}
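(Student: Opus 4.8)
The plan is to establish the inequality through a two-step chain of comparisons, interpolating through the common quantity $a\psi_k(bt+\beta_i)$. This mirrors the structure of the argument used in the proof of Theorem~\ref{thm:psi-funct}, where a product of monotone factors was bounded by bridging through an intermediate expression.

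First I would invoke Lemma~\ref{lem:k-psi5}. The hypotheses guarantee that $at+\alpha_i \leq bt+\beta_i$ with both arguments positive, so that lemma gives $\psi_k(at+\alpha_i) \leq \psi_k(bt+\beta_i)$. Since $a>0$, multiplying both sides by $a$ preserves the inequality and yields
\[
a\psi_k(at+\alpha_i) \leq a\psi_k(bt+\beta_i).
\]

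Next I would note that the positivity assumption $\psi_k(at+\alpha_i)>0$ propagates upward: together with the monotonicity inequality just obtained it forces $\psi_k(bt+\beta_i) \geq \psi_k(at+\alpha_i) > 0$. With $\psi_k(bt+\beta_i)$ now known to be positive and $a \leq b$, multiplying the scalar inequality $a \leq b$ through by $\psi_k(bt+\beta_i)$ gives
\[
a\psi_k(bt+\beta_i) \leq b\psi_k(bt+\beta_i).
\]
Chaining the two displays then produces $a\psi_k(at+\alpha_i) \leq b\psi_k(bt+\beta_i)$, which is the claim.

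The one point I would flag as the crux is the second multiplication. To pass from $a \leq b$ to the weighted inequality without reversing its direction, one genuinely needs the factor $\psi_k(bt+\beta_i)$ to be nonnegative; otherwise the step fails. This is precisely why the hypothesis assumes positivity of $\psi_k(at+\alpha_i)$ rather than of the larger argument directly: the positivity transfers from the smaller argument to the larger one through Lemma~\ref{lem:k-psi5}, so no separate assumption on $\psi_k(bt+\beta_i)$ is required. Everything else is a routine use of order-preservation under multiplication by positive scalars.
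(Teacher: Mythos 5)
Your proposal is correct and follows essentially the same route as the paper: Lemma~\ref{lem:k-psi5} gives $\psi_k(at+\alpha_i)\leq\psi_k(bt+\beta_i)$, and the chain $a\psi_k(at+\alpha_i)\leq a\psi_k(bt+\beta_i)\leq b\psi_k(bt+\beta_i)$ finishes the argument. You are in fact slightly more careful than the paper, which leaves implicit the point you rightly flag --- that the positivity of $\psi_k(bt+\beta_i)$ needed for the second step is inherited from $\psi_k(at+\alpha_i)>0$ via the monotonicity.
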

\begin{proof}
From Lemma \ref{lem:k-psi5}, we have $\psi_k(at+\alpha_i)\leq\psi_k(bt+\beta_i)$. This together with the fact that 
$0<a \leq b$  yields,\\
$a\psi_k(at+\alpha_i) \leq a\psi_k(bt+\beta_i)\leq b\psi_k(bt+\beta_i)$  concluding the proof.\\
\end{proof}


\begin{theorem}\label{thm:funct1}
Define a function $X$ for $k>0$ by 
\begin{equation*}\label{eqn:funct1} 
X(t)=\prod_{i=1}^{n} \frac{\Gamma_k(at+\alpha_i )}{\Gamma_k(bt+\beta_i)}, \quad t\in [0,\infty)
\end{equation*}
where  $a, b,  \alpha_i,  \beta_i$, $i=1,...,n$ are real numbers such that $0<a\leq b$,  $\alpha_i>0$, $\beta_i>0$, $at+\alpha_i>0$, $bt+\beta_i>0$,   $at+\alpha_i \leq bt+\beta_i$ and $\psi_k(at+\alpha_i)>0$. Then $X$ is decreasing and for  $t\in[0,1]$, the following inequalities hold true.
\end{theorem}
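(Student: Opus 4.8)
The plan is to follow the logarithmic-derivative template already employed in Theorems \ref{thm:psi-funct}, \ref{thm:polygamma-funct-1} and \ref{thm:funct2}, since the product structure of $X$ converts naturally into a sum under the logarithm. First I would set $g(t)=\ln X(t)$, so that
\begin{equation*}
g(t)=\sum_{i=1}^{n}\bigl[\ln\Gamma_k(at+\alpha_i)-\ln\Gamma_k(bt+\beta_i)\bigr].
\end{equation*}
Differentiating term by term via the chain rule and the defining relation $\psi_k=\Gamma_k'/\Gamma_k$ produces the factors $a$ and $b$ from the inner derivatives, giving
\begin{equation*}
g'(t)=\sum_{i=1}^{n}\bigl[a\,\psi_k(at+\alpha_i)-b\,\psi_k(bt+\beta_i)\bigr].
\end{equation*}

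The crux of the argument is then to show that each summand is nonpositive. This is exactly what Lemma \ref{lem:k-psi6} delivers: under the stated hypotheses $0<a\le b$, $at+\alpha_i>0$, $bt+\beta_i>0$, $at+\alpha_i\le bt+\beta_i$ and $\psi_k(at+\alpha_i)>0$, one has $a\,\psi_k(at+\alpha_i)\le b\,\psi_k(bt+\beta_i)$ for every index $i$. Consequently every bracket in the sum is $\le 0$, whence $g'(t)\le 0$ on $(0,\infty)$. Since $X=e^{g}$ and the exponential is increasing, monotonicity of $g$ transfers directly to $X$, so $X$ is decreasing on $[0,\infty)$.

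To finish, I would restrict to $t\in[0,1]$ and use the established monotonicity to write $X(0)\ge X(t)\ge X(1)$. Evaluating the endpoints gives $X(0)=\prod_{i=1}^{n}\frac{\Gamma_k(\alpha_i)}{\Gamma_k(\beta_i)}$ and $X(1)=\prod_{i=1}^{n}\frac{\Gamma_k(a+\alpha_i)}{\Gamma_k(b+\beta_i)}$, yielding the claimed chain of inequalities
\begin{equation*}
\prod_{i=1}^{n}\frac{\Gamma_k(\alpha_i)}{\Gamma_k(\beta_i)}\;\ge\;\prod_{i=1}^{n}\frac{\Gamma_k(at+\alpha_i)}{\Gamma_k(bt+\beta_i)}\;\ge\;\prod_{i=1}^{n}\frac{\Gamma_k(a+\alpha_i)}{\Gamma_k(b+\beta_i)}.
\end{equation*}

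There is no serious obstacle here, as the hypotheses on $a,b,\alpha_i,\beta_i$ have been arranged precisely so that Lemma \ref{lem:k-psi6} applies to each factor of the product simultaneously. The only points requiring care are bookkeeping ones: confirming that the chain rule attaches the correct multipliers $a$ and $b$ to the respective $\psi_k$ terms, and observing that the positivity assumption $\psi_k(at+\alpha_i)>0$ is exactly the condition needed to invoke Lemma \ref{lem:k-psi6}. I would also note that the conclusion for $t\in(1,\infty)$ follows in the same manner, giving $X(t)\le X(1)$, which could be recorded as a remark.
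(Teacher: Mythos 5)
Your proposal is correct and follows essentially the same route as the paper's own proof: take $\ln X$, differentiate term by term to get $\sum_i[a\psi_k(at+\alpha_i)-b\psi_k(bt+\beta_i)]$, apply Lemma \ref{lem:k-psi6} to each summand, and evaluate the resulting monotone function at $t=0$ and $t=1$. Your closing remark about $t\in(1,\infty)$ matches the remark the paper records after the theorem.
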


\begin{equation}\label{eqn:k1}
\prod_{i=1}^{n} \frac{\Gamma_k(a+\alpha_i )}{\Gamma_k(b+\beta_i)}\leq 
\prod_{i=1}^{n} \frac{\Gamma_k(at+\alpha_i )}{\Gamma_k(bt+\beta_i)}\leq
\prod_{i=1}^{n} \frac{\Gamma_k(\alpha_i )}{\Gamma_k(\beta_i)}.
\end{equation}

\begin{proof}
Let $u(t)=\ln X(t)$ for every $t\in[0,\infty)$. Then,
\begin{align*}
u(t) &=\ln  \prod_{i=1}^{n} \frac{\Gamma_k(at+\alpha_i )}{\Gamma_k(bt+\beta_i)} \\
&=\sum_{i=1}^{n}\left[  \ln \Gamma_k(at+\alpha_i )  - \ln \Gamma_k(bt+\beta_i) \right] \\
\intertext{Then,}
u'(t)&=\sum_{i=1}^{n}\left[   a\frac{\Gamma'_k(at+\alpha_i )}{\Gamma_k(at+\alpha_i )} - 
                                                 b\frac{\Gamma'_k(bt+\beta_i)}{\Gamma_k(bt+\beta_i)} \right] \\
&= \sum_{i=1}^{n}\left[ a \psi_k(at+\alpha_i) - b \psi_k(bt+\beta_i) \right]  \leq0. \quad 
 \text{(by Lemma~\ref{lem:k-psi6})}.
\end{align*}
\end{proof}
\noindent
That implies $u$ is decreasing on $t\in[0,\infty)$. Hence, $X=e^{u(t)}$ is decreasing for each $t\in[0,\infty)$. Then for $t\in[0,1]$ we have, 
\begin{equation*}
X(1)\leq X(t)\leq X(0) 
\end{equation*}
resulting to inequalities ~(\ref{eqn:k1}).\\

\begin{remark}
For  $t\in(1,\infty)$, we have $X(t)\leq X(1)$ yielding
\begin{equation*}
\prod_{i=1}^{n} \frac{\Gamma_k(at+\alpha_i )}{\Gamma_k(bt+\beta_i)} \leq
\prod_{i=1}^{n} \frac{\Gamma_k(a+\alpha_i )}{\Gamma_k(b+\beta_i)}.
\end{equation*}
\end{remark}

\begin{remark}
If $0<b\leq a$,  $at+\alpha_i \geq bt+\beta_i$ and $\psi_k(bt+\beta_i)>0$, then  for  $t\in[0,1]$ the inequalities ~(\ref{eqn:k1}) are reversed. \\
\end{remark}

\section{Concluding Remarks}
\noindent
We have discovered that Lemma 2.7 and Theorem 2.8 of the paper \cite{Nantomah-2014-GJMA} are erroneous. The errors had to do with the claim that
\begin{equation*}
\psi^{(m+1)}(\alpha+t)\psi^{(m)}(\beta+t) -   \psi^{(m+1)}(\beta+t)\psi^{(m)}(\alpha+t) \geq0
\end{equation*}
and consequently that, $V(t)=\frac{ \psi^{(m)}(\alpha+t)}{\psi^{(m)}(\beta+t)}$ \,  is non-decreasing, where $t\geq0$, $0<\alpha \leq \beta$ and $m$ a positive odd integer. As a result, the inequalities \cite[eqn. (9)]{Nantomah-2014-GJMA} resulting from these claims are false.
This paper is therefore a corrected version of the paper \cite{Nantomah-2014-GJMA}.


\bibliographystyle{plain}

\begin{thebibliography}{99}

\bibitem{Diaz-Pariguan-2007} R. D\'{i}az and E. Pariguan, \textit{On hypergeometric functions and Pachhammer k-symbol}, Divulgaciones Matemt\'{i}cas \textbf{15}(2)(2007), 179-192.

\bibitem{Krasniqi-Mansour-Shabani-2010} V. Krasniqi, T. Mansour and A. Sh. Shabani, \textit{Some Monotonicity Properties and Inequalities for $\Gamma$ and $\zeta$ Functions}, Mathematical Communications \textbf{15}(2)(2010), 365-376.

\bibitem{Krasniqi-Shabani-2010} V. Krasniqi,  A. Sh. Shabani, \textit{Convexity Properties and Inequalities for a Generalized
Gamma Function}, Applied Mathematics E-Notes \textbf{10}(2010), 27-35.

\bibitem{Laforgia-Natalini-2006-JIPAM} A. Laforgia and P. Natalini, \textit{Turan-Type Inequalities for some Special Functions}, J. Ineq. Pure Appl. Math. \textbf{7}(1)(2006), Art. 22.

\bibitem{Mansour-2008} T. Mansour, \textit{Some inequalities for the $q$-Gamma Function}, J. Ineq. Pure Appl. Math. \textbf{9}(1)(2008), Art. 18.

\bibitem{Nantomah-2014-GJMA} K. Nantomah, \textit{Inequalities for the Psi and $k$-Gamma functions}, Global Journal of Mathematical Analysis, \textbf{2}(4)(2014), 290-296.

\bibitem{Nantomah-Iddrisu-2014} K. Nantomah and M. M. Iddrisu, \textit{The $k$-Analogue of Some Inequalities for the Gamma Function}, Electron. J. Math. Anal. Appl. \textbf{2}(2)(2014), 172-177.

\bibitem{Shabani-2008a} A. Sh. Shabani, \textit{Generalization of some inequalities for the Gamma Function}, Mathematical Communications \textbf{13}(2008), 271-275.












\end{thebibliography}


\end{document}